\renewcommand{\le}{\leqslant}
\renewcommand{\ge}{\geqslant}
\newcommand{\R}{\mathbb R}
\begin{document}
\title{Acute sets}
\author{Dmitriy Zakharov \thanks{Moscow, school 179}}
\date{}
\maketitle
\newtheorem{theorem}{Theorem}
\newtheorem{lemma}{Lemma}
\newtheorem{claim}{Proposition}
\renewcommand{\refname}{References}

\begin{abstract} A set of points in $\R^d$ is {\it acute} if any three points from this set form an acute triangle. In this note we construct an acute set in $\R^d$ of size at least $1.618^d$. We also present a simple example of an acute set of size at least $2^{\frac{d}{2}}$. \end{abstract}

\section*{Introduction}

A set of points in $\R^d$ is {\it acute}, if any three points of this set form an acute triangle.
In 1962 Danzer and Gr\"{u}nbaum \cite{DG} posed the following question: what is the maximum size $f(d)$ of an acute set in $\R^d$? They proved a linear lower bound $f(d) \ge 2d-1$ and conjectured that this bound is tight. However, in 1983 Erd\H os and F\"uredi \cite{EF} disproved this conjecture in large dimensions. They gave an exponential lower bound
\begin{equation} \label{ef}
f(d) \ge \frac{1}{2} \left ( \frac{2}{\sqrt{3}} \right )^d > 0.5 \cdot 1.154^d.
\end{equation}
Their proof is a very elegant application of the probabilistic method. One drawback of their approach is that only the existence of an acute set of such size is proven, with no possibility to turn it into an explicit construction. 

In 2009 Ackerman and Ben-Zwi \cite{AZ} improved $(\ref{ef})$ by factor $\sqrt{d}$:
$$
f(d) \ge c\sqrt{d} \left ( \frac{2}{\sqrt{3}} \right )^d 
$$

In 2011 Harangi \cite{H} refined the approach of Erd\H os and F\"uredi and improved their bound to
$$
f(d) \ge c \left ( \sqrt[10]{\frac{144}{23}} \right )^d>c \cdot 1.2^d.
$$

In this note we give a simple proof of the following inequality:

\begin{theorem}\label{thm2}
$f(d+2)\ge 2f(d) $, that is, for any $d$-dimensional acute set there exists a $(d+2)$-dimensional acute set of twice the size.
\end{theorem}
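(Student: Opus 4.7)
Let $A = \{a_1, \dots, a_n\} \subset \R^d$ be an acute set with $n = f(d)$. The plan is to construct a set $B \subset \R^{d+2}$ of size $2n$ consisting of two ``copies'' of $A$ glued together via the extra two coordinates. Concretely, I will pick $n$ pairwise distinct unit vectors $u_1, \dots, u_n \in \R^2$ lying in an open half-plane through the origin (so that no two are antipodal), fix a small parameter $r > 0$ to be chosen later, and set
$$b_i := (a_i,\, r u_i), \qquad b_i' := (a_i,\, -r u_i), \qquad B := \{b_1, \dots, b_n, b_1', \dots, b_n'\} \subset \R^{d+2}.$$
Then $|B| = 2n$, and it remains to verify that $(Q - P) \cdot (R - P) > 0$ at each vertex of each triangle $PQR \subset B$.

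The verification splits according to whether the three first-$d$ coordinates of $P, Q, R$ are pairwise distinct. If they are --- this covers all triangles inside a single copy $\{b_i\}$ or $\{b_i'\}$ as well as all mixed triangles with three different indices --- then the dot product at each vertex has the form $(a_j - a_i) \cdot (a_k - a_i) + O(r^2)$. Since $A$ is finite and acute, the first term is bounded below by some constant $\delta > 0$ over all acute triples in $A$, so for sufficiently small $r$ every such angle is acute. This step is routine.

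The main obstacle is the degenerate case in which two of the three vertices share the same first-$d$ coordinate, i.e.\ the triangle contains a pair $b_i, b_i'$. At the two angles based at $b_i$ and $b_i'$ the first-$d$ contribution vanishes, and acuteness rests entirely on the last two coordinates. A direct calculation gives these two dot products as
$$2r^2\bigl(1 - u_i \cdot u_j\bigr) \qquad \text{and} \qquad 2r^2\bigl(1 + u_i \cdot u_j\bigr),$$
where $u_j$ is the extra-coordinate direction of the third vertex. Both are strictly positive for any $r > 0$ because the $u_i$'s were chosen distinct (forcing $u_i \cdot u_j < 1$) and non-antipodal (forcing $u_i \cdot u_j > -1$); in particular, these degenerate cases impose no further constraint on $r$. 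The third angle, opposite the pair $b_i, b_i'$, has first-$d$ contribution $|a_i - a_j|^2 > 0$ and is automatically acute.

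Finally, fixing $r$ small enough that the $O(r^2)$ perturbation cannot overcome the positive slack $\delta$ in the non-degenerate cases (finitely many triples) completes the construction and yields $f(d+2) \ge 2f(d)$.
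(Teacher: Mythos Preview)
Your argument is correct and is essentially the paper's own proof: the paper likewise forms $\{(x,\pm\phi(x)):x\in X\}$ with $\phi(x)$ on a small circle, chooses $r$ so that $4r^2$ is less than the minimum scalar product $s=\min\langle y-x,z-x\rangle$ over $x\neq y,\,x\neq z$, and then handles the degenerate case $x=y$ by exactly the computation $2r^2(1\pm u_i\cdot u_j)>0$. The only cosmetic differences are that the paper splits cases by vertex (whether the apex shares its $\R^d$-projection with another point) rather than by triangle, and merely requires the $\pm\phi(x)$ to be pairwise distinct rather than placing the $u_i$ in an open half-plane.
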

Theorem~\ref{thm2} implies lower bound for $f(d)$:
$$
f(d) \ge 2^{\frac{d}{2}}
$$ 

Let $F_d$ be the $d$-th Fibonacci number, that is $F_0 = F_1 = 1$ and $F_{d+2} = F_{d+1}+F_d$. Also, we prove the following inequality:

\begin{theorem}\label{thm1}
There exist $d$-dimensional acute sets of size $F_{d+1}$ that is, $f(d) \ge F_{d+1}$.
\end{theorem}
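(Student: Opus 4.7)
The plan is to prove Theorem~\ref{thm1} by induction on $d$ using the Fibonacci identity $F_{d+1} = F_d + F_{d-1}$. The base cases $d \le 2$ are immediate: any two points give $f(1) \ge F_2 = 2$, and any acute triangle gives $f(2) \ge F_3 = 3$. For the inductive step one is given acute sets $A \subset \R^{d-1}$ of size $F_d$ and $B \subset \R^{d-2}$ of size $F_{d-1}$, and the task is to combine them into an acute set in $\R^d$ of size $F_d + F_{d-1} = F_{d+1}$.

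Embed $A$ in the hyperplane $\{x_d = 0\}$ of $\R^d$ after scaling by $\mu > 0$, and embed $B$ (scaled by $\lambda > 0$ and translated) in the $(d-2)$-dimensional affine subspace $\{x_{d-1} = s, \; x_d = h\}$ transverse to that hyperplane. Triangles lying entirely in the image of $A$ or entirely in the image of $B$ are acute by the inductive hypothesis. The content lies in the two families of mixed triangles. A direct dot-product calculation shows that a mixed triangle with two $A$-vertices and one $B$-vertex is acute at its two $A$-vertices if and only if the projection of the $B$-vertex onto the first $d-1$ coordinates lies in the positive cone at every $a \in A$, where the positive cone at $a$ is defined as $\{p : (a'-a) \cdot (p-a) > 0 \text{ for all } a' \in A \setminus \{a\}\}$; acuteness at the $B$-vertex is automatic once $h$ is taken large. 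Symmetrically, mixed triangles with two $B$-vertices and one $A$-vertex require that the projection of each $A$-vertex onto its first $d-2$ coordinates lies in the positive cone at every $b \in B$.

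To satisfy both families of positive-cone conditions at once, I would strengthen the inductive hypothesis: each acute set $A_k \subset \R^k$ of size $F_{k+1}$ produced by the induction should additionally admit a nonempty open intersection of its positive cones, containing a ball whose radius is comparable to the diameter of the set. Then the scaling ratio $\mu/\lambda$ and the shift $s$ can be chosen so that the embedded (scaled) copies of $A$ and $B$ project into each other's positive-cone regions, and $h$ is taken large enough for the remaining acuteness conditions.

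The main obstacle I foresee is closing the induction by exhibiting a new good point in the positive-cone intersection of the constructed $d$-dimensional set. A natural candidate is the image of a good point of $A$, possibly perturbed slightly into the new coordinate; checking that it lies in the positive cone at each of the $F_{d+1}$ vertices, especially against the $F_{d-1}$ newly added $B$-vertices, is a direct computation that will fix the allowed range of $\mu, \lambda, s, h$.
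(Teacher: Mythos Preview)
Your plan has a genuine obstruction that is not the one you anticipate. Work out the two positive-cone conditions quantitatively. With $A'\subset\R^{d-1}$ and $B'\subset\R^{d-2}$ the original sets, the condition ``projection of each $b$ lies in $\bigcap_{a}C_a^A$'' becomes
\[
\frac{(\lambda b'+t,\,s)}{\mu}\in \bigcap_{a'\in A'}C_{a'}^{A'}\quad\text{for all }b'\in B',
\]
and since the right-hand side is a bounded region (once $A'$ affinely spans $\R^{d-1}$, which it does for $d\ge 3$) this forces $\lambda/\mu$ to be small. The symmetric condition ``projection of each $a$ lies in $\bigcap_b C_b^B$'' becomes
\[
\frac{\mu\,(a')_{1..d-2}-t}{\lambda}\in \bigcap_{b'\in B'}C_{b'}^{B'}\quad\text{for all }a'\in A',
\]
and since $B'$ affinely spans $\R^{d-2}$ this forces $\mu/\lambda$ to be small, because the projection of $A'$ to its first $d-2$ coordinates has positive diameter (three collinear points are never acute, so $A'$ cannot collapse). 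These two requirements are incompatible: no choice of $\mu,\lambda,s,t,h$ works. Already for $d=3$ (an acute triangle $A$ and a two-point set $B$) the argument breaks.

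The paper avoids this trap by \emph{not} taking $A$ and $B$ independently. Its strengthened inductive hypothesis is that there is a single acute set $X\subset\R^d$ of size $F_{d+1}$ together with a hyperplane $h$ such that $|X\cap h|=F_d$ and $X$ lies on one side of $h$. Because $A:=X\cap h$ and $B:=X\setminus A$ are already part of one acute set, all the mixed angles at the ``same-set'' vertices are acute from the start; there is no mutual-containment constraint to satisfy. The inductive step then goes from $\R^d$ to $\R^{d+1}$ by \emph{doubling} each point $a\in A$ into two points $(a,\pm\phi(a))$ on a small circle in the two new/normal directions (this is exactly Proposition~\ref{pr2}), while sliding $B$ onto a nearby tilted hyperplane $H$. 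The new set has $2|A|+|B|=F_d+F_{d+1}=F_{d+2}$ points, with $|A|+|B|=F_{d+1}$ of them on $H$, which closes the induction. The idea you are missing is this nested ``hyperplane'' structure and the circle-doubling of the points on it; combining two unrelated lower-dimensional sets cannot be made to work.
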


Using the formula for Fibonacci numbers we can write an asymptotic inequality for $f(d)$:
$$
f(d) \ge \left ( \frac{1+\sqrt{5}}{2} \right )^{d} \ge 1.618^{d}
$$

Proofs of Theorem~\ref{thm2} and \ref{thm1} are explicit and allow to construct acute sets effectively.\\

The best known upper bound on $f(d)$ is $f(d)\le 2^d-1$, and follows from the main result of \cite{DG}. Danzer and Gr\"unbaum proved that if a set $S$ of points in $\R^d$ determines only acute and right angles, then $|S|\le 2^d$. Moreover, if $|S|=2^d$ then $S$ must be an affine image of a $d$-dimensional cube.
\\

\section*{Proof of Theorem~\ref{thm2}}

Proofs of both theorems are based on two simple propositions:

\begin{claim}\label{pr1}
For any points $a, b, c$, forming an acute angle, there is $\epsilon > 0$ such that for all $\tilde a, \tilde b, \tilde c$,  $||a-\tilde a||, ||b-\tilde b||, ||c-\tilde c|| < \epsilon$, angle $(\tilde a, \tilde b, \tilde c)$ is acute too.
\end{claim}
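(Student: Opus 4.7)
The plan is to reduce acuteness to the positivity of a continuous function on the triple $(a,b,c)$, and then invoke openness of the set on which a continuous function is positive.

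First I would recall that the angle $(\tilde a, \tilde b, \tilde c)$ at vertex $\tilde b$ is acute if and only if $\langle \tilde a - \tilde b,\ \tilde c - \tilde b\rangle > 0$, where $\langle\cdot,\cdot\rangle$ is the standard inner product in $\R^d$. This is just the law of cosines rephrased via the dot product. Define
\[
\Phi(x,y,z) := \langle x-y,\ z-y\rangle,
\]
a polynomial (hence continuous) function of $(x,y,z) \in \R^d \times \R^d \times \R^d$. By hypothesis $\Phi(a,b,c) > 0$.

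Next I would use continuity at the point $(a,b,c)$: there exists $\epsilon > 0$ such that whenever $\|\tilde a - a\|, \|\tilde b - b\|, \|\tilde c - c\| < \epsilon$, we have $|\Phi(\tilde a, \tilde b, \tilde c) - \Phi(a,b,c)| < \Phi(a,b,c)$, and hence $\Phi(\tilde a, \tilde b, \tilde c) > 0$. One can if desired make this quantitative by expanding
\[
\Phi(\tilde a,\tilde b,\tilde c) - \Phi(a,b,c) = \langle \tilde a - a,\, c-b\rangle + \langle a-b,\, \tilde c - c\rangle - \langle \tilde b - b,\, (c-b)+(a-b)\rangle + O(\epsilon^{2}),
\]
and bounding each term by Cauchy--Schwarz, but this is not necessary for the qualitative statement.

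There is no real obstacle: the proposition is essentially the statement that ``being an acute angle'' is an open condition on triples of points, which follows immediately once one writes acuteness as positivity of a continuous (in fact bilinear-affine) function. The only thing to be slightly careful about is that the claim is about a single angle of the triangle; for the later applications (in the proofs of Theorems~\ref{thm2} and \ref{thm1}) one would apply the proposition three times, once for each vertex, and take the minimum of the three values of $\epsilon$.
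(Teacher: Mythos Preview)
Your proof is correct. The paper does not actually supply a proof of this proposition: it is listed as one of two ``simple propositions'' and left without argument, presumably because the authors regard it as immediate. Your approach---writing acuteness at $b$ as the condition $\Phi(a,b,c):=\langle a-b,\,c-b\rangle>0$ and invoking continuity of $\Phi$---is exactly the natural way to make this precise and is almost certainly what the authors have in mind.
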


\begin{claim}[The key fact]\label{pr2}
Suppose that $X \subset \R^d$ is an acute set and $r>0$ is a sufficiently small number. For each $x \in X$ we take an arbitary point $\phi(x) \in \R^2$ on the circle of radius $r$ with center in the origin such that all points $\pm \phi(x)$ are different. Then the set $Y = \{(x, \pm \phi(x))|x\in X\} \subset \R^{d+2}$ is acute as well.
\end{claim}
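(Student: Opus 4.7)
\medskip

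\noindent\textbf{Proof plan for Proposition~\ref{pr2}.}

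The plan is to classify triples of points in $Y$ according to how many distinct $X$-projections they have, and handle each case separately. Since each $x\in X$ contributes exactly two points $(x,\pm\phi(x))$ to $Y$, any three points of $Y$ either (a) project to three distinct elements of $X$, or (b) two of them share a projection $x\in X$ (so they are antipodal in the last two coordinates) and the third projects to some different $y\in X$.

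In case (a), fix three points of the form $(x_i,\pm\phi(x_i))\in Y$ for distinct $x_1,x_2,x_3\in X$. Each such point differs from $(x_i,0)\in\R^{d+2}$ by a vector of norm exactly $r$. The triple $(x_1,0),(x_2,0),(x_3,0)$ is isometric to $x_1,x_2,x_3$ and hence forms an acute triangle by hypothesis on $X$. Apply Proposition~\ref{pr1} to each of the three vertex-orderings of this acute triangle to obtain an $\epsilon>0$ such that any $r<\epsilon$ perturbation keeps all three angles acute. Since $X$ is finite, there are only finitely many such unordered triples, so I can take $r$ smaller than the minimum $\epsilon$ over all of them; this handles case~(a) uniformly.

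In case (b), I handle the triangle by direct inner-product computation, which does not require smallness of $r$. Write the three points as $a=(x,\phi(x))$, $b=(x,-\phi(x))$, $c=(y,\varepsilon\phi(y))$ with $\varepsilon\in\{\pm1\}$ and $y\ne x$. A short calculation shows
\[
\langle b-a,c-a\rangle \;=\; 2r^{2}-2\varepsilon\,\phi(x)\!\cdot\!\phi(y),
\qquad
\langle a-b,c-b\rangle \;=\; 2r^{2}+2\varepsilon\,\phi(x)\!\cdot\!\phi(y),
\]
and, using $|\phi(x)|=|\phi(y)|=r$, the cross terms cancel at $c$, yielding
\[
\langle a-c,b-c\rangle \;=\; \|x-y\|^{2}+|\varepsilon\phi(y)|^{2}-|\phi(x)|^{2} \;=\; \|x-y\|^{2}>0.
\]
The two quantities at $a$ and $b$ are strictly positive because $|\phi(x)\!\cdot\!\phi(y)|<r^{2}$: Cauchy--Schwarz is strict precisely when $\varepsilon\phi(y)\ne\phi(x)$, which is guaranteed by the hypothesis that all $\pm\phi(x)$ are distinct.

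Combining the two cases proves that every triple from $Y$ spans an acute triangle. I expect the main subtlety to be case~(b): one must notice the crucial cancellation at the vertex $c$ that makes the inner product there equal to $\|x-y\|^2$ (with no dependence on $r$), and one must use the distinctness hypothesis on $\pm\phi(x)$ to guarantee that the Cauchy--Schwarz inequality is strict at the vertices $a$ and $b$. Case~(a) is essentially a compactness/finiteness argument built on top of Proposition~\ref{pr1}; it pins down how small $r$ must be.
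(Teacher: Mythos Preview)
Your proof is correct and follows the same two-case split as the paper; case~(b) in particular is essentially identical to the paper's computation (the paper writes the angles at $a$ and $b$ as $2\bigl(\|\phi(x)\|^{2}\pm\langle\phi(x),\phi(y)\rangle\bigr)$ and absorbs the angle at $c$ into its general case). One small wording fix: strictness of Cauchy--Schwarz for $|\phi(x)\cdot\phi(y)|<r^{2}$ requires $\phi(y)\ne\pm\phi(x)$, not just $\varepsilon\phi(y)\ne\phi(x)$; your appeal to the distinctness hypothesis covers both, so the argument is fine.

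The one substantive difference is in case~(a). You invoke Proposition~\ref{pr1} together with finiteness of $X$ to get a uniform $\epsilon$, whereas the paper bypasses Proposition~\ref{pr1} entirely and argues quantitatively: it sets
\[
s=\min\{\langle y-x,z-x\rangle:\ x,y,z\in X,\ x\ne y,\ x\ne z\}>0,
\]
requires $4r^{2}<s$, and then at any vertex whose projection differs from the other two the inner product splits as $\langle y-x,z-x\rangle+\langle b\phi(y)-a\phi(x),c\phi(z)-a\phi(x)\rangle\ge s-4r^{2}>0$. The paper's route is a touch more self-contained and yields an explicit threshold for $r$; your route is equally valid and perhaps more conceptual, but gives only existence of a sufficiently small $r$.
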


To prove Theorem \ref{thm2}, we apply Proposition \ref{pr2} to a maximal acute set $X$ in $\R^d$, $|X|=f(d)$. We get an acute set $Y \subset \R^{d+2}$  of size $|Y|=2|X|$ which proves the theorem.

\begin{proof}[Proof of Proposition 2] 

 The scalar product of two vectors $u,v$ is denoted by $\langle u,v\rangle$. Put $$s:=\min\{\langle y-x,z-x\rangle: x, y, z \in X, x \neq y, x \neq z\}.$$ Since the set $X$ is acute, we have $s>0$, and we can take a positive number $r$ such that $4r^2 < s$.

 Our aim is to prove that $Y$ is acute. Take three distinct points $\tilde x,\tilde y, \tilde z \in Y$, where
$$
\tilde x = (x, a\phi(x)),\ \ \ \tilde y = (y, b\phi(y)),\ \ \ \tilde z = (z, c\phi(z)),\ \ \ a,b,c\in\{\pm 1\}.
$$

Suppose that $x \neq y$ and $x \neq z$. Then
\begin{equation} \label{1}
\langle\tilde y - \tilde x,\tilde z - \tilde x\rangle=\langle y-x,z-x\rangle+\langle b\phi(y)-a\phi(x),c\phi(z)-a\phi(x)\rangle.
\end{equation}
The first scalar product on the right hand side is at least $s$ by the definition of $s$, while the second scalar product is at most $4r^2$. By the choice of $r$, the sum of these two scalar products is positive, which means that the angle $(\tilde y,\tilde x, \tilde z)$ is acute.

Suppose that $x=y$ (the case $x=z$ is treated in the same way). We have $a+b=0$, so
$$
\langle\tilde y - \tilde x,\tilde z - \tilde x\rangle=\langle b\phi(y)-a\phi(x),c\phi(z)-a\phi(x)\rangle=\langle 2a\phi(x), a\phi(x)-c\phi(z)\rangle=2\big(\|\phi(x)\|^2\pm \langle\phi(x),\phi(z)\rangle\big) >0,
$$
because $\phi(x) \neq \pm\phi(z)$. Thus, the angle $(\tilde y,\tilde x, \tilde z)$ is acute in this case as well.

Proposition $\ref{pr2}$ is proved.

\end{proof}

\section*{Proof of Theorem~\ref{thm1}}

\textbf{Sketch of the proof.} We prove that there exists a $d$-dimensional acute set of size $F_{d+1}$ with the property that there exists a $(d-1)$-dimensional hyperplane $H \subset \R^d$ such that $H$ contains $F_d$ points and the remaining $F_{d-1}$ points are on the same side of $H$.

The proof is by induction. The basic idea is the same as in the first construction: we want to replace a point $v$ with two points $v \pm \phi(v)$. However, this time we have only one extra dimension. So we do this only for the points on the hyperplane $H$. It is not hard to see that if we choose the vectors $\phi(v)$ carefully, then this results in a $(d + 1)$-dimensional acute set of size $F_{d+2}$. (To get the ``hyperplane property", one needs to modify this set a bit. So some extra work needs to be done here but it is mainly technical.)
\vskip 0.5cm

We will derive by induction Theorem \ref{thm1} from the following lemma:

\begin{lemma}
Suppose that $X \subset \R^d$ is an acute set and  $h$ is a hyperplane such that $X$ lies on one side of $h$. Then there is an acute set $X' \subset \R^{d+1}$ and a hyperplane $H$ in $\R^{d+1}$ such that $|X'|= |X|+|X\cap h|$, $|X' \cap H| = |X|$ and $X'$ lies on one side of $H$.
\end{lemma}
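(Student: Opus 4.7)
The plan is to construct $X'$ in $\R^{d+1}$ by first embedding $X$ onto a slightly tilted hyperplane $H$, then adjoining one new point for each $v \in X \cap h$ on the positive side of $H$.

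Concretely: I would let $\ell \colon \R^d \to \R$ be the affine form with $\ell|_h = 0$ and $\ell > 0$ on the half-space containing $X \setminus h$, pick a small $\alpha > 0$, and set $\iota(v) := (v, \alpha \ell(v))$ for $v \in X$. Then $\iota(X)$ sits on the hyperplane $H := \{(y, t) : t = \alpha\ell(y)\}$, and for $\alpha$ small enough the map $\iota$ is close to the isometry $v \mapsto (v, 0)$, so $\iota(X)$ is still acute by Proposition~\ref{pr1}. For each $v \in X \cap h$ I would define $v^* := \iota(v) + \phi(v)$ where $\phi(v) \in \R^{d+1}$ is a short vector with a small component along a ``pointing direction'' $u_v \in \R^d$ --- i.e., a vector satisfying $u_v \cdot (q - v) > 0$ for every $q \in X \setminus \{v\}$ --- and a small positive component in the new coordinate direction. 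The existence of such $u_v$ is a standard consequence of the acuteness of $X$: the vectors $\{q - v : q \ne v\}$ have pairwise positive inner products, hence their convex cone is pointed, and one picks $u_v$ in the interior of its dual. Setting $X' := \iota(X) \cup \{v^* : v \in X \cap h\}$ gives $|X'| = |X| + |X \cap h|$, $|X' \cap H| = |X|$, and $X'$ on one side of $H$.

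To check that $X'$ is acute I would do a case analysis on how many of the three chosen points are of the form $v^*$. When none or only one of them is new, the inner product $\langle \tilde y - \tilde x, \tilde z - \tilde x \rangle$ decomposes as a strictly positive inner product coming from the acute set $X$ plus correction terms that vanish as $\alpha$ and the parameters controlling $\phi(v)$ shrink. The exception is when a pair $(\iota(v), v^*)$ shares a base point $v \in X \cap h$: there the $X$-inner-product term vanishes, and the positivity of the inner product at $\iota(v)$ is ensured precisely by the pointing-direction property of $u_v$.

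The hard case, which is the one the sketch calls ``technical'', is the triangle $(\iota(v), v^*, w^*)$ with $v, w \in X \cap h$ distinct: the two non-base edges at $v^*$ are both short, so the leading-order terms cancel and the sign of the inner product hinges on delicate subleading contributions involving $u_v \cdot (w - v)$ and the new-coordinate components. To resolve this I would take the companion vectors $\phi(v)$ all of the same norm $r$ and restricted to the two-plane spanned by the inward normal to $h$ and the new coordinate direction, parametrized by distinct angles on a short arc; this mirrors the constant-magnitude construction from Proposition~\ref{pr2} and, by the same strict Cauchy--Schwarz argument, makes the subleading term strictly positive.
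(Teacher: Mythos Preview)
Your proposal has a genuine gap at the very step you flag as ``technical''. In your final fix you restrict each companion vector $\phi(v)$ to the $2$-plane spanned by the inward normal $n$ to $h$ and the new coordinate $e_{d+1}$. But for any two distinct points $v,q\in X\cap h$ the vector $\iota(q)-\iota(v)=(q-v,0)$ lies in the tangent space of $h\times\{0\}$, hence is orthogonal to that entire $2$-plane. Consequently
\[
\langle v^*-\iota(v),\ \iota(q)-\iota(v)\rangle \;=\; \langle \phi(v),\ (q-v,0)\rangle \;=\; 0,
\]
so the triangle $(\iota(v),v^*,\iota(q))$ has a \emph{right} angle at $\iota(v)$. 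Your earlier ``pointing direction'' $u_v$ was designed precisely to make this inner product positive, but you abandon $u_v$ when you pass to the circle construction; the two requirements --- positive pairing with $q-v$ for the angle at $\iota(v)$, and the strict Cauchy--Schwarz inequality for the angle at $v^*$ in $(\iota(v),v^*,w^*)$ --- pull $\phi(v)$ in incompatible directions, and you have not shown how to satisfy both simultaneously. Note also that you cannot simply invoke Proposition~\ref{pr2}: it concerns \emph{antipodal} pairs $(x,\pm\phi(x))$ centred at $(x,0)$, whereas your pair $\{\iota(v),v^*\}=\{(v,0),(v,0)+\phi(v)\}$ is asymmetric.

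The paper avoids this tension by a different construction: it does \emph{not} keep the base point $\iota(v)$ at all. Each $v\in A:=X\cap h$ is replaced by the symmetric pair $(v,\phi(v))$ and $(v,-\phi(v))$, with $\phi(v)$ on a small circle in the $2$-plane of the $d$-th and $(d{+}1)$-th coordinates (both of which vanish on $A$). This is a direct instance of Proposition~\ref{pr2}, so all triangles within the doubled set $\tilde A_+\cup\tilde A_-$ are acute automatically. The points of $B:=X\setminus A$ are then projected onto a hyperplane $H_2$, obtained by a small tilt of an auxiliary hyperplane $H$, chosen so that $H_2$ contains the ``$+$'' copies $(v,\phi(v))$ together with the projected $B$, while the tilt forces the $\phi(v)$ to be pairwise distinct and distinct from their negatives. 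The idea your attempt is missing is exactly this antipodal splitting: it is the symmetry $\pm\phi(v)$ about the original point, not the one-sided displacement $\iota(v)\mapsto\iota(v)+\phi(v)$, that makes all the within-$A$ angles acute.
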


\begin{proof}[Proof of Theorem~\ref{thm1}]
For $d=1$ we take $X=\{0, 1\} \subset \R^1$ and hyperplane $h = \{x~\in~\R^1|x~=~0\}$. Clearly, $|X| = F_2, |X \cap h| = F_1$ and the pair $(X, h)$ satisfies Lemma conditions.

Suppose that we constructed an acute set $X \subset \R^d$ and a hyperplane $h$ such that $|X|=F_{d+1}, |X \cap h| = F_d$ and $X$ lies on one side of $h$. Then, by Lemma 1, there is an acute set $X'\subset \R^{d+1}$ and a hyperplane $H$ such that 
$$
|X'|=|X|+|X\cap h|=F_{d+1}+F_d=F_{d+2}, ~ |X' \cap H| = |X| = F_{d+1}
$$ 
and $X'$ lies on one side of $H$. So  the induction step is completed.

\end{proof}

The proof of the Lemma is based on propositions \ref{pr1} and \ref{pr2}.

\begin{proof}[Proof of Lemma 1] We can assume that 
$$
h = \{(x_1, \ldots, x_{d-1}, 0, 0) | \,x_i \in \R\}
$$
$$
X \subset P = \{(x_1, \ldots, x_{d}, 0)  | \,x_i \in \R\}
$$
Let $A = X \cap h$, $B = X \setminus A$. 

Consider a $(d-1)$-plane $h_2 \subset P$ parallel to $h$ such that $X$ lies between $h$ and $h_2$. Let 
$$
h_3 = h + (0, \ldots, 0, r) \subset \R^{d+1},
$$
 where $r > 0$ is a sufficiently small positive number. Let $H \subset \R^{d+1}$ be the hyperplane passing through $h_2$ and $h_3$. Consider sets $A_+ = A + (0, \ldots, 0, 0, r),\, A_- = A - (0, \ldots, 0, 0, r)$ and let $B_H$ be the orthogonal projection of $B$ onto $H$.
 
\begin{claim}\label{pr3}
For a sufficiently small $r$ and arbitary $x, y, z \in A_+ \cup A_- \cup B_H$ such, that $\{x, y, z\} \not \subset A_+ \cup A_-$, triangle $\{x, y, z\}$ is acute. 
\end{claim}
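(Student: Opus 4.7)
The plan is to split the analysis by the number of distinct ``base points'' in $X$ corresponding to $x, y, z$: every element of $A_+ \cup A_- \cup B_H$ has a natural preimage in $X$ (namely $a_\pm \mapsto a$ and $b' \mapsto b$), and the only way two distinct elements can share a preimage is by being $a_+$ and $a_-$ for some $a \in A$. So there are just two cases to treat.

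In the generic case, the three preimages are three distinct points of $X$ and hence form an acute triangle by the acuteness of $X$. Each of $x, y, z$ lies within $O(r)$ of its preimage, so Proposition~\ref{pr1} guarantees that the triangle $\{x, y, z\}$ remains acute once $r$ is small; finiteness of $X$ lets a single threshold on $r$ handle all such triples simultaneously.

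In the degenerate case, two of the three points share a preimage; after relabeling, $x = a_+$ and $y = a_-$ for some $a \in A$. The hypothesis $\{x, y, z\} \not\subset A_+ \cup A_-$ then forces $z \in B_H$, the projection onto $H$ of some $b \in B$. I would verify the three angles by direct computation. First, $\langle x - z, y - z\rangle \to \|a - b\|^2 > 0$ as $r \to 0$, so this scalar product is positive for small $r$. Next, $y - x$ has all coordinates zero except the last, which equals $-2r$; so the remaining two scalar products reduce to $-2r(z_{d+1} - r)$ and $2r(z_{d+1} + r)$, respectively. The main technical step is therefore the geometric claim $0 \le z_{d+1} < r$. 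Taking $h_2 = P \cap \{x_d = t\}$ and $h_3 = \{x_d = 0,\, x_{d+1} = r\}$, the hyperplane $H$ has unit normal proportional to $(0, \ldots, 0, r, t)$, and an explicit projection formula yields $z_{d+1} = rt(t - b_d)/(r^2 + t^2)$. Since $b$ lies strictly between $h$ and $h_2$, i.e., $0 < b_d \le t$, both of the required inequalities hold, and both scalar products come out strictly positive.
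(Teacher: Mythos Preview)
Your proof is correct and follows essentially the same route as the paper: the same two-case split by whether the three preimages in $X$ are distinct, the same appeal to Proposition~\ref{pr1} in the generic case, and the same geometric fact $-r < z_{d+1} < r$ for the degenerate triple $\{a_+, a_-, b\}$. The only difference is cosmetic: you derive the explicit formula $z_{d+1} = rt(t - b_d)/(r^2 + t^2)$, whereas the paper argues qualitatively that the projection of $b$ onto $H$ lies between the hyperplanes $P \pm (0,\ldots,0,r)$.
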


\begin{proof}[Proof of Proposition 3] The distance between $x \in X$ and any corresponding point $\tilde x \in A_+ \cup A_- \cup B_H$ is at most $r$, so for all sufficiently small $r$ an obtuse angle can occur only in triangles $\{x, y, z\} = \{ a_+, a_-, b\}$, where $a_+=(a, 0, r), a_-=(a, 0, -r)$ and $(a, 0, 0) \in A, b \in B_H$. Since the distance between $a_+$ and $a_-$ equals to $2r$, the distances between $a_{\pm}$ and $b$ are bounded from below by a number not depending on $r$, therefore, angle $b$ is acute for small $r$. By a choice of $H$, point $b$ lies between hyperplanes $P \pm (0, \ldots, 0, r)$, thus angles $(a, 0, r)$ and $(a, 0, -r)$ of triangle $\{ a_+, a_-, b\}$ are acute too. 

\end{proof}

For each $x \in A$ we denote by $C(v) \subset \R^{d+1}$ the circle of radius $r$ with center in $v$ and orthogonal to $h$. 

\begin{claim}\label{pr4}
For any $\epsilon > 0$ there is a hyperplane $H_2$ such that:

 1. For each point $v \in B_H$ distance from $v$ to $H_2$ is less than $\epsilon$. 
 
 2. For each point $(v, 0, 0) \in A$ there exists a point $ \bar v = (v,\,\, \phi(v)) \in H_2 \cap C(v)$ such that $||(v, 0, r) -\bar v|| < \epsilon$ and all points $\pm \phi(v)$ are distinct.
 
\end{claim}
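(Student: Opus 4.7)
The hyperplane $H$ itself already does almost everything required: $B_H \subset H$ by construction (so condition~1 holds trivially with $H$ in place of $H_2$), and for every $v \in A$ the lifted point $(v,0,r)$ lies on $h_3 \subset H$ and on $C(v)$. Taking $H_2 = H$ and $\bar v = (v,0,r)$ would however give $\phi(v) = (0,r)$ for every $v$, violating distinctness of the $2|A|$ points $\pm\phi(v)$. The plan is therefore to tilt $H$ by an arbitrarily small, but generic, amount so as to break this coincidence without losing condition~1 or the proximity part of~2.

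Choose coordinates so that $h = \{x_d = x_{d+1} = 0\}$, $h_2 = \{x_d = c,\ x_{d+1} = 0\}$, and hence $H = \{rx_d + cx_{d+1} = rc\}$. For $\alpha = (\alpha_1,\ldots,\alpha_{d-1}) \in \R^{d-1}$ define
$$
H_\alpha = \Bigl\{x \in \R^{d+1} :\ rx_d + cx_{d+1} + \sum_{i<d}\alpha_i x_i = rc\Bigr\},
$$
so $H_0 = H$ and $H_\alpha$ depends continuously on $\alpha$. Since $B_H$ is finite, for every sufficiently small $\alpha$ each $v \in B_H$ lies within $\epsilon$ of $H_\alpha$. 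Intersecting $H_\alpha$ with the $2$-plane through $v \in A$ orthogonal to $h$, parametrised as $v + r(\sin\theta,\cos\theta)$ in the last two coordinates, gives $r\sin\theta + c\cos\theta = c - \tfrac{1}{r}\sum_{i<d}\alpha_i v_i$. Expanding around the unperturbed solution $\theta=0$ yields a small solution $\theta_v = -\tfrac{1}{r^2}\sum_{i<d}\alpha_i v_i + O(\|\alpha\|^2)$, and the corresponding point $\bar v = (v,\phi(v))$ with $\phi(v) = r(\sin\theta_v,\cos\theta_v)$ lies within $\epsilon$ of $(v,0,r)$ for $\|\alpha\|$ small enough, giving the proximity part of~2.

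It remains to ensure that the $2|A|$ points $\pm\phi(v)$ are distinct. Since every $\phi(v)$ is close to $(0,r)$, no $\phi(v)$ can equal $-\phi(w)$ (which would be near $(0,-r)$), so the only issue is to avoid $\phi(v) = \phi(w)$, i.e.\ $\theta_v = \theta_w$, for $v\ne w$ in $A$. To leading order in $\alpha$ this is the linear condition $\sum_{i<d}\alpha_i(v_i-w_i) = 0$, a proper hyperplane in $\R^{d-1}$ for each of the finitely many pairs. The main obstacle --- and the only real subtlety --- is the order of quantifiers: I would first fix a direction of $\alpha$ outside this finite union of hyperplanes, thereby securing a definite lower bound on $|\sum_{i<d}\alpha_i(v_i-w_i)|/\|\alpha\|$ for all pairs, and only then shrink $\|\alpha\|$ so that the $O(\|\alpha\|^2)$ error in the expansion of $\theta_v$ cannot close the gap, while simultaneously meeting condition~1 and the $\epsilon$-proximity in~2. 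Everything else is continuity and the finiteness of $A$ and $B_H$.
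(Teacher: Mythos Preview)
Your proof is correct and follows essentially the same approach as the paper: perturb $H$ to a generic nearby hyperplane $H_2$, use continuity for condition~1 and for the proximity part of~2, and use genericity of the perturbation direction (avoiding finitely many hyperplanes determined by pairs in $A$) for the distinctness of the $\pm\phi(v)$. One simplification worth noting: from your own equation $r\sin\theta + c\cos\theta = c - \tfrac{1}{r}\sum_{i<d}\alpha_i v_i$ the value of $\theta_v$ (the root near $0$) is determined by $\sum_{i<d}\alpha_i v_i$ \emph{exactly}, so $\theta_v=\theta_w$ is equivalent to $\sum_{i<d}\alpha_i(v_i-w_i)=0$ with no $O(\|\alpha\|^2)$ error, and your careful order-of-quantifiers argument is unnecessary --- this exact observation is how the paper disposes of the distinctness.
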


\begin{proof}[Proof of Proposition 4] Let $u \in \R^{d+1}:$  $H = \{ x \in \R^{d+1}| \langle x, u \rangle = 1\}$. Take a vector $\alpha \in \R^{d+1}$ such that $||\alpha||<\delta$ for sufficiently small $\delta > 0$ and $\alpha$ is not orthogonal to any of the vectors $v_1 - v_2$ where $v_1, v_2 \in A$. We take $H_2 = \{ x \in \R^{d+1}| \langle x, u+\alpha \rangle = 1\}$. 

1. For $v \in B_H$
$$
\rho(v, H_2) = \frac{|\langle v, u+\alpha \rangle - 1|}{||u+\alpha||}\le \frac{|\langle v, \alpha \rangle|}{||u||-\delta} \le \delta \frac{||v||}{||u||-\delta} < \epsilon
$$
as $\delta$ is sufficiently small.

2. Consider the intersection $l$ and $l_2$ of the hyperplanes $H$ and $H_2$ with the $2$-dimensional plane $\{(v, x_d, x_{d+1})|x_i \in \R \}$, where $(v, 0, 0) \in A$. Clearly, $l$ intersects $C(v)$ in two points (one of them is $(v, 0, r)$), and so for small $\delta$ line $l_2$ intersects $C(x)$ in two points as well, and also one of these points tends to $(v, 0, r)$ as $\delta \to 0$. This point we denote by $(v, \phi(v))$. It is sufficient to show that all points $\pm \phi(v)$ are distinct for $(v, 0, 0) \in A$.

As $||\phi(v)-(0, r)|| < r$ for all $\delta < r$,  thus $\phi(v_1) \not = -\phi(v_2)$. Take $\bar v_1 = (v_1, 0, 0), \bar v_2=(v_2, 0, 0) \in A$. If $\phi(v_1)=\phi(v_2)$, then 
$$
(v_1, \phi(v_1)) - (v_1, 0, 0) = (v_2, \phi(v_2)) - (v_2, 0, 0),
$$
that is
$$
(v_1, \phi(v_1))-(v_2, \phi(v_2))=\bar v_1 - \bar v_2 = \bar w
$$
but $(v_1, \phi(v_1))$ and $(v_2, \phi(v_2))$ lie in $H_2$, consequently $\bar w$ is orthogonal to $u + \alpha$ which contradicts the defenition of $\alpha$. Therefore all points $\pm \phi(v)$ are distinct.

\end{proof}

Now we take sufficiently small $\epsilon$ and corresponding hyperplane $H_2$ and a map $\phi$. Let $\tilde B$ be the orthogonal projection of $B_H$ onto $H_2$, also let
$$
\tilde A_+=\{ (x, \phi(x)) | (x, 0, 0) \in A\},\,\, \tilde A_-=\{ (x, -\phi(x)) | (x, 0, 0) \in A\}
$$

Combining Propositions 1 and 4 we can claim that the Proposition 3 is still true for corresponding sets $\tilde A_+, \tilde A_-$ and $ \tilde B$. We have to check that the set $Y = \tilde A_+ \cup \tilde A_-$ is acute. But it immediately follows from Proposition 2. We conclude that the set $X' = \tilde A_+ \cup \tilde A_- \cup \tilde B$ is acute. We have $|X'|=|\tilde A_+|+|\tilde A_-|+|\tilde B|=|X|+|X \cap h|$, also $\tilde A_+ \cup \tilde B \subset H_2$ and $X'$ lies on one side of $H_2$. Thus, the pair $(X', H_2)$  satisfies the conditions of Lemma 1.
\end{proof}

 \textsc{Acknowledgements}: We would like to thank Andrey Kupavskii and Alexandr Polyanskii for discussions that helped to improve the main result, as well as for their help in preparing this note. We would also like to thank Prof. Raigorodskii for introducing us to this problem and for his constant encouragement.

\end{document}